\numberwithin{equation}{section}
\newtheorem{theorem}{Theorem}[section]
\newtheorem{lemma}[theorem]{Lemma}
\newenvironment{proof}[1][Proof]{\begin{trivlist}
\item[\hskip \labelsep {\bfseries #1}]}{\end{trivlist}}
\newenvironment{remark}[1][Remarks]{\begin{trivlist}
\item[\hskip \labelsep {\bfseries #1}]}{\end{trivlist}}
\newcommand*{\QEDA}{\hfill\ensuremath{\blacksquare}}%
\begin{document}

\title{Occupation times of discrete-time fractional Brownian motion}

\author{Manfred Denker\footnote{mhd13@psu.edu} , Xiaofei Zheng  \footnote{xxz145@psu.edu}}
\affil{Department of Mathematics, The Pennsylvania State University, \\State College, PA, 16802, USA}
\date{}
\maketitle
\begin{abstract}
We prove a conditional local limit theorem for  discrete-time fractional Brownian motions (dfBm) with Hurst parameter $\frac 34<H<1$. Using results from infinite ergodic theory it is then shown that the properly scaled occupation time of dfBm  converges to  a Mittag-Leffler distribution.
\end{abstract}
\section{Introduction and Main Results}

In 1957, Darling and Kac \cite{KacDarling} established limit theorems for the occupation times of Markov processes with stationary transition probabilities, proving that under a ``Darling-Kac condition'',  the limit distributions are necessarily Mittag-Leffler distributions with appropriate indices. Earlier  and weaker results were obtained by Dobrushin \cite{Do} and Chung and Kac \cite{CK}. The theory is applicable to Markov chains and in particular, to random walks, the sum of independent, identically distributed random variables $\{X_n\}$ with common distribution function $F$. $X_n$ may take a lattice  or  a non-lattice distribution.  When $F$ has mean $0$ and belongs to the domain of attraction of some stable law with index $d$,  it is known that $S_n$, the  partial sum of $\{X_n\}$ obeys a  local limit theorem \cite{Ibra}, which plays an important  role in proving the limiting distribution of the occupation time. This is because the local limit theorem  implies the ``Darling-Kac condition'' of  i.i.d. random variables $\{X_n\}$: $\sum_{k=0}^n P(S_k\in A)\sim |A|n^{1-\frac{1}{d}}L(n)$, where $L(n)$ is some slowly varying function\footnote{In the whole article, $A_n\sim B_n$ means $\frac{A_n}{B_n}\to 1$ as $n\to \infty$.}.  By Darling and Kac \cite{KacDarling}, the ``Darling-Kac condition'' implies that the normalized occupation time of $S_n$ converges to a Mittag-Leffler distribution. 

The Darling-Kac theorem can also be extended to the sum of weakly dependent random variables, one of such dependence is the Renyi-mixing sequence \cite{CS}.  Only little seems to be known on this topic, a recent study of the two authors \cite{Zheng,DZ} sheds some light onto this question by connecting  occupation times of ergodic sums in  Gibbs-Markov dynamical systems  to group extensions over these systems (see \cite{AD}).  The present paper is about the limiting distribution of occupation times of  the  discrete-time fractional Brownian motion $(B^H(n))_{n\in \mathbb Z}$ which is not weakly dependent but has long range dependence. It will  be shown that $B^H(n)$ satisfies a conditional local limit theorem.  Next, the occupation times  will be represented as partial sums of  iteratives of a transformation $\tilde{T}$ on the infinite measure space $\Omega\times \mathbb{R}$, where $(\Omega, \mathcal B_0, P)$ is the probability space carrying the Gaussian process. The conditional local limit theorem ensures that  $\tilde{T}$ is  a pointwise dual ergodic transformation with respect to the canonical product measure  on the product space $\Omega\times \mathbb{R}$. Finally, the limiting distribution of the occupation time can be shown to be   Mittag-Leffler distribution by  \cite{JonAsy}.

To be more precise, let $\{B^H(t)\}$ be a fractional Brownian motion with Hurst index $\frac{1}{2}<H<1$, and define $\{X_n\}$ to be the increment of the fBm: $X_n=B^H(n)-B^H(n-1)$ for $n\in \mathbb{Z^+}$. $\{X_n\}$ is  also called discrete-time fractional Gaussian noise (DFGN).  Then for any $n$, $(X_1,X_2,\cdots,X_n)$ has a multivariate normal distribution with mean $0$ and the covariance function $b(i-j)=E[X_iX_j]=E[X_jX_i]$ satisfying
\begin{equation*}\label{con1}
b(t)=\dfrac{1}{2}[(t+1)^{2H}-2t^{2H}+(t-1)^{2H}],
\end{equation*}
and $b(0)=E(X_i)^2=1$.
This is because $E[B^H(s)B^H(t)]=\frac{1}{2}[|s|^{2H}+|t|^{2H}-|t-s|^{2H}]$. So $\{X_n\}$ are Gaussian random variables  with a long-range dependence structure.

It is  proved in \cite{TM} that for  $\overline{d}_N^2\sim N^{2H}$,  $Z_N(t):=\dfrac{1}{\overline{d}_N}\sum_{i=1}^{[Nt]}X_i=\dfrac{1}{\overline{d}_N}B^H([Nt])$ converges weakly  to $B^H(t)$ in the Skorohod space $\mathcal{D}([0,1])$ as $N\to \infty$. Denote by $S_n$ the partial sum of $\{X_n\}$:  $S_n:=\sum_{i=1}^nX_i,$ which  actually equals $B^H(n)$.

Let $V$ be a non-negative function over $\mathbb{R}$, the state space of the DFGN $\{X_n\}$. In this article, we  study the limiting distribution of the random variable $\sum_{i=1}^n V(S_i)$ as $n\to \infty$.
If $V(x)$ is the characteristic function of some Borel set $B\subset \mathbb{R}$, then $\sum_{i=1}^n V(S_i)$ becomes the occupation time of $S_n$ of the set $B$: i.e. $\sum_{i=1}^n V(S_i)=\#\{i\leq n: S_i\in B\}$.
What plays an important role in finding the limiting distribution of the occupation time is the ``Darling-Kac'' condition: $\{S_n\}$ has a conditional local limit theorem when $\frac{3}{4}<H<1$. We state it below as our first main result.

\newpage

\begin{theorem}[Conditional Local Limit Theorem]\label{cll}
Suppose $\{X_n\}$ is a sequence of stationary Gaussian random variables with mean $0$ and covariance function $b(i-j)=E(X_iX_j)$ satisfying $b(t)=\dfrac{1}{2}[(t+1)^{2H}-2t^{2H}+(t-1)^{2H}]$, where $3/4<H<1$.  Then there exists a normalization sequence $\{d_n\}$, satisfying $d_n^2= n^{2H}L(n)$ as $n\to \infty, $ where $L(n)$ is  slowly varying and converging to a constant, such that for any interval $(a,b)\subset \mathbb R$ and any sequence $\{q_n\}$ and $\kappa \in \mathbb{R}$, such that $\dfrac{q_n}{d_n}\to \kappa$ as $n\to \infty$,  the conditional probability satisfies
\begin{equation*}
\lim_{n\to \infty}d_nP\bigg(S_n\in (q_n+a,q_n+b)|(X_{n+1},X_{n+2},\cdots) \bigg)=(b-a)g(\kappa), \text{ a.s.,} 
\end{equation*}
where  $g$ is the density function of the standard normal distribution.
 
 In  case that $q_n=0$, the convergence is uniform for   almost all $\omega$ and intervals $(a,b).$
\end{theorem}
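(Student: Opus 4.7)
The plan is to exploit the joint Gaussianity of $(X_k)$. Setting $\mathcal G_n := \sigma(X_{n+1}, X_{n+2},\ldots)$, the conditional law of $S_n$ given $\mathcal G_n$ is Gaussian with random conditional mean $m_n := E[S_n \mid \mathcal G_n]$ and \emph{deterministic} conditional variance $\sigma_n^2$. Hence
\begin{equation*}
d_n\, P\bigl(S_n \in (q_n+a,q_n+b) \mid \mathcal G_n\bigr) = \frac{d_n}{\sqrt{2\pi\sigma_n^2}}\int_a^b \exp\!\left(-\frac{(q_n+y-m_n)^2}{2\sigma_n^2}\right)dy,
\end{equation*}
and the theorem reduces to two asymptotic statements: (a) $d_n$ can be chosen so that $d_n^2 = n^{2H}L(n)$ with $L$ slowly varying and $\sigma_n/d_n\to 1$; (b) $m_n/d_n\to 0$ almost surely. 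Given both, the exponent converges to $-\kappa^2/2$ uniformly in $y\in[a,b]$ (via $q_n/d_n\to\kappa$), the prefactor tends to $1/\sqrt{2\pi}$, and the integral evaluates to $(b-a)g(\kappa)$.

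For (a), I would compute $\sigma_n^2$ through the Wiener--Hopf normal equations for the best linear predictor $m_n = \sum_{k\ge 1} c_{n,k} X_{n+k}$, or equivalently via the spectral representation in $L^2(f\,d\lambda)$ with $f(\lambda) \sim C_H|\lambda|^{1-2H}$ near the origin. Using the covariance asymptotics $b(t)\sim H(2H-1)t^{2H-2}$ and a careful partial--summation / Riemann--sum analysis, one arrives at $\sigma_n^2 = n^{2H}L(n)$ with $L$ slowly varying and converging to a strictly positive constant; one then sets $d_n:=\sigma_n$, so that $\sigma_n/d_n\equiv 1$ by construction.

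Step (b) is the main technical obstacle. Since $m_n$ is centred Gaussian, the plan is: first, derive a sharp bound on $\Var(m_n)$ from the covariance structure; then, via Gaussian tail bounds and the Borel--Cantelli lemma along a polynomial subsequence $n_k = \lfloor k^{\gamma}\rfloor$, establish $m_{n_k}/d_{n_k}\to 0$ a.s.; finally, bound the oscillation of $m_n$ for $n\in[n_k, n_{k+1}]$ via a maximal inequality for the Gaussian linear functional $n\mapsto m_n$, exploiting the long--memory structure of the covariance. The condition $3/4 < H < 1$ enters at this point: it is in this regime that the covariance sums controlling $\Var(m_n)$, as well as related quadratic quantities such as $\sum_k b(k)^2$, have the precise growth behaviour needed for the Borel--Cantelli step to close.

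Finally, combining (a) and (b) with dominated convergence in the integral identity above yields the limit $(b-a)g(\kappa)$ a.s. For the uniform version with $q_n=0$ (so $\kappa=0$), one observes that on the a.s.\ event $m_n/\sigma_n\to 0$ the density $(2\pi\sigma_n^2)^{-1/2}\exp(-(y-m_n)^2/(2\sigma_n^2))$ is, for $y$ in any bounded set, uniformly close to its maximum $(2\pi\sigma_n^2)^{-1/2}$, while for the unbounded part of $\mathbb R$ a uniform Gaussian tail bound suffices; this delivers convergence uniformly in $(a,b)$ and, on a single full--measure set, uniformly in $\omega$.
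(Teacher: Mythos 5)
Your skeleton matches the paper's: condition on the future, use joint Gaussianity to write the conditional law of $S_n$ as a normal with random mean $m_n$ and deterministic variance $\sigma_n^2$, and reduce the theorem to (a) $\sigma_n^2=n^{2H}L(n)$ and (b) $m_n/d_n\to 0$ a.s. But the entire analytic content of the theorem lies in the one estimate you defer to ``a careful partial--summation / Riemann--sum analysis'' and ``a sharp bound on $\Var(m_n)$'': namely that $\Var(m_n)=\lim_{k\to\infty}B^T\Sigma_{22}^{-1}(k,k)B$ is $o(n^{2H})$ (indeed polynomially smaller), where $B(s)=\Cov(S_n,X_{n+s})=\sum_{i=s}^{n+s-1}b(i)$ and $\Sigma_{22}$ is the covariance Toeplitz matrix of the future block. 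This is not a routine covariance summation: it requires controlling the \emph{inverse} of a long-memory Toeplitz operator. The paper spends all of Section \ref{var} on exactly this point --- the asymptotics $B(s)\asymp n(s+n)^{2H-2}$, eigenvalue bounds for $\Sigma_{22}$ via the spectral density of fGn, the Neumann expansion $\Sigma_{22}^{-1}=c(k)\sum_{l\ge0}(I-c(k)\Sigma_{22})^l$ with $c(k)\asymp k^{1-2H}$, and a recursive inequality for $B^T(I-c(k)\Sigma_{22})^lB$ --- and it is precisely there that the hypothesis $3/4<H<1$ is used (both exponents $-(1-\gamma)(4H-3)$ and $-2\gamma(1-H)$ must be negative). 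Your proposal names the destination but gives no route to it, and also treats (a) and (b) as independent tasks when they are not: if $\sigma_n^2/\Var(S_n)$ tended to a constant $\rho^2<1$, then $\Var(m_n)\asymp n^{2H}$ and (b) would be false, so one must prove the stronger statement $\sigma_n^2\sim\Var(S_n)$, not merely that $\sigma_n^2$ is regularly varying of index $2H$.

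A secondary, structural difference: the paper never runs a Borel--Cantelli argument over $n$. It conditions on the finite block $(X_{n+1},\dots,X_{n+k})$, proves for each \emph{fixed} $n$ that $B^T\Sigma_{22}^{-1}B\to0$ and $B^T\Sigma_{22}^{-1}X_{[2]}\to0$ a.s. as $k\to\infty$ (Borel--Cantelli over $k$), and then uses Doob's martingale convergence to conclude the exact a.s. identity $d_nP(S_n\in(q_n+a,q_n+b)\mid\mathcal G_n)=(b-a)g(\kappa(n))$; the limit in $n$ and the uniformity claim for $q_n=0$ are then immediate. Your route through the infinite-future conditional mean directly is legitimate, but the subsequence-plus-maximal-inequality machinery you invoke for (b) is unnecessary once the variance bound is in hand: a bound $\Var(m_n)\le Cn^{2H-\delta}$ gives $P(|m_n|>\epsilon d_n)\le\exp(-c\epsilon^2n^{\delta})$, which is summable, so plain Borel--Cantelli along the full sequence closes (b). As it stands, the proposal is a correct reduction of the theorem to the hard estimate, not a proof of it.
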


Then the second main result of the paper follows:   the normalized occupation time of $S_n$ converges to   Mittag-Leffler distribution.

\begin{theorem}[Limiting distribution of the occupation time of $S_n$]\label{main}
Let $(X_n)_{n\in\mathbb Z}$ be as in Theorem \ref{cll}.  Denote the occupation time of $S_k$ in the  interval $(a,b)$ at time $n$ by $\ell_n([a,b])=\sum_{i=1}^n\mathbb 1_{(a,b)}(S_i)$. Then there exists a sequence of numbers $a_n=O(n^{1-H})$ such that
\begin{eqnarray}\label{covg}
&&\frac{1}{2\epsilon} \int_{-\epsilon}^{\epsilon}\left[\int_{\Omega} v\bigg(\dfrac{\ell_n([a-x,b-x])}{a_n}\bigg)  \Phi(\omega)dP(\omega)\right] \, dx\to E[v((b-a)Y_{\alpha})],
\end{eqnarray}
for any $\epsilon>0$, any bounded and continuous function $V:\mathbb R\to \mathbb R$,  any probability density function $\Phi \in L^2( P)$, where  $Y_{\alpha}$ is    a random variable having the Mittag-Leffler distribution with index $\alpha=1-H$.
\end{theorem}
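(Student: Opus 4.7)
The plan is to deduce Theorem \ref{main} from Aaronson's abstract Darling--Kac theorem for pointwise dual ergodic transformations (\cite{JonAsy}) by moving the problem onto the infinite measure space already flagged in the introduction. Introduce the skew-product
\[
\tilde T:\Omega\times\mathbb R\to\Omega\times\mathbb R,\qquad \tilde T(\omega,x):=(\sigma\omega,\,x+X_1(\omega)),
\]
where $\sigma$ is the shift on $\Omega$. Since $P$ is $\sigma$-invariant and Lebesgue measure $\lambda$ is translation-invariant, $\tilde T$ preserves the $\sigma$-finite infinite measure $\mu:=P\otimes\lambda$, and iteration gives $\tilde T^i(\omega,x)=(\sigma^i\omega,\,x+S_i(\omega))$. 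For $F(\omega,y):=\mathbb 1_{(a,b)}(y)\in L^1(\mu)$ we have $\mu(F)=b-a$ and
\[
\sum_{i=1}^n F\circ\tilde T^i(\omega,x)=\ell_n([a-x,b-x])(\omega),
\]
so the $x$-averaged integral in \eqref{covg} is exactly the expectation of $v\bigl(a_n^{-1}\sum_{i=1}^n F\circ\tilde T^i\bigr)$ against the initial probability density $(\omega,x)\mapsto\Phi(\omega)\cdot(2\epsilon)^{-1}\mathbb 1_{(-\epsilon,\epsilon)}(x)$ on $(\Omega\times\mathbb R,\mu)$.

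The core step is to show that $\tilde T$ is pointwise dual ergodic with return sequence $a_n$ regularly varying of index $\alpha:=1-H\in(0,1/4)$; the natural choice is $a_n=g(0)\sum_{k=1}^n d_k^{-1}$, which is $O(n^{1-H})$ since $d_k^{-1}\sim c\,k^{-H}$ by Theorem \ref{cll}. For product test functions $f=\Phi\otimes h$ with $\Phi\in L^2(P)$ and $h$ bounded with compact support, the transfer operator iterate $\hat T^k f(\omega,x)$ can be written through the conditional density of $S_k$ at $-x$ given the future $\sigma$-algebra $\sigma(X_{k+1},X_{k+2},\dots)$, weighted by $\Phi$ and convolved with $h$. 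Theorem \ref{cll} is exactly the statement that this density, scaled by $d_k$, converges a.s.\ to $g(0)$, with the second clause of that theorem supplying the uniformity in $(a,b)$ needed to pass from the pointwise asymptotic to a Cesàro statement. Summation then yields $a_n^{-1}\sum_{k=1}^n\hat T^k f\to\mu(f)$ $\mu$-a.e., which is the definition of pointwise dual ergodicity. Conservativity (from recurrence of $S_n$) and ergodicity (from non-degeneracy of the Gaussian structure on fibres) have to be verified in parallel, but both are standard once the local limit theorem is available.

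With pointwise dual ergodicity in hand, the abstract Darling--Kac theorem (in the formulation of \cite{JonAsy}) delivers \emph{strong distributional convergence}
\[
a_n^{-1}\sum_{i=1}^n F\circ\tilde T^i\;\xrightarrow{d}\;\mu(F)\,Y_\alpha\;=\;(b-a)\,Y_\alpha,\qquad \alpha=1-H,
\]
along any initial probability measure absolutely continuous with respect to $\mu$ with density in the appropriate Banach algebra. The density $\Phi\otimes(2\epsilon)^{-1}\mathbb 1_{(-\epsilon,\epsilon)}$ lies in this class because $\Phi\in L^2(P)$ and the $x$-factor is bounded with compact support; testing against the bounded continuous $v$ then produces precisely the right-hand side of \eqref{covg}.

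The main obstacle is the pointwise dual ergodicity step. Theorem \ref{cll} is fibrewise a.s., and one must upgrade it into a $\mu$-a.e.\ asymptotic for the averaged sum of $\hat T^k f$. The uniformity of Theorem \ref{cll} in $(a,b)$ for $q_n=0$ is essential, as it controls the oscillations of the density in the space variable. Equally essential is the outer $x$-integration in \eqref{covg}: it corresponds to testing against a genuinely $\mu$-nonsingular initial distribution, which is the natural setting for Aaronson's strong distributional convergence; without it the limit would not be expected to hold pointwise in $x$.
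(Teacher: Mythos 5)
Your overall route is the same as the paper's: represent the occupation time via the skew product $\tilde T(\omega,x)=(T\omega,\,x+X_1(\omega))$ on $(\Omega\times\mathbb R,\,P\otimes m_{\mathbb R})$, use the conditional local limit theorem to control $\sum_k P_{\tilde T^k}(\mathbb 1_\Omega\otimes\mathbb 1_{(a,b)})$, deduce pointwise dual ergodicity with return sequence $a_n=\sum_k g(0)/d_k=O(n^{1-H})$, and invoke Aaronson's Darling--Kac machinery to get strong distributional convergence to $(b-a)Y_{1-H}$. The identification of the $x$-average with integration against the initial density $(2\epsilon)^{-1}\mathbb 1_{(-\epsilon,\epsilon)}\otimes\Phi$ is also exactly what the paper does.

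There is, however, one genuine gap: you dismiss ergodicity of $\tilde T$ on $(\Omega\times\mathbb R,\mu)$ as ``standard once the local limit theorem is available.'' It is not, and the paper conspicuously does \emph{not} prove it. Ergodicity of an $\mathbb R$-valued cocycle extension over a long-range dependent Gaussian shift is a delicate question (it is not implied by recurrence plus a local limit theorem, and ``non-degeneracy of the Gaussian structure on fibres'' is not an argument). This matters twice in your sketch. First, pointwise dual ergodicity requires $a_n^{-1}\sum_k\hat T^k f\to\mu(f)$ for \emph{all} $f\in L^1(\mu)$, and the only way to pass from the single test function $\mathbb 1_\Omega\otimes\mathbb 1_{(a,b)}$ (which is what Theorem \ref{cll} controls) to general $f$ is Hurewicz's ratio ergodic theorem; on a conservative but non-ergodic system that theorem produces conditional expectations on the invariant $\sigma$-field, not $\mu(f)$. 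Second, Aaronson's strong distributional convergence is stated for conservative \emph{ergodic} pointwise dual ergodic transformations. The paper's workaround is the actual content of its Theorem \ref{ced}: it takes an ergodic decomposition $\mu=\int_Y\mu_y\,d\lambda(y)$, applies Hurewicz and then Corollary 3.7.3 of Aaronson on each component $(X,\mathcal B,\mu_y,\tilde T)$ with return sequence $a_nC(y)$, where $C(y)=(b-a)/\mu_y(\Omega\times(a,b))$, and observes that the limit $E[v(C(y)\mu_y(F)Y_\alpha)]=E[v((b-a)Y_\alpha)]$ is independent of $y$, so that integrating over $y$ recovers the stated conclusion. Without either proving ergodicity of $\tilde T$ or running this decomposition argument, your proof does not close; you should adopt one of the two, and the decomposition is the one that is actually available.
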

\begin{remark} (1) Taking $\Phi=1$ one could try to evaluate the left hand side when $\epsilon\to 0$. This could show that the occupation times have a weak limit which is Mittag-Leffler. We do not know this, but the result shows that convergence in  the weak* sense in $L^\infty(dx)$.

(2) We do not know the precise connection of this result to the local time of fractional Brownian motion. In \cite{KY} it is remarked that the law of the local time of a fractional Brownian motion  is not a Mittag-Leffler distribution unless it is Brownian motion, although Kono's result in \cite{Kono}  suggested that it may be true. Theorem \ref{main} may give a hint to explain this phenomenon. Kasahara and Matsumoto have found that the limiting distribution of the occupation time of $B^H$  is similar but not equal to a  Mittag-Leffler distribution.  In the proof that the limiting distribution is not Mittag-Leffler, it is assumed that $d\geq 2$, $0<Hd<1$. However, their proof is still available when $H\neq\dfrac{1}{2}$ and  $d=1$.
\end{remark}

This paper is structured as follows. Section \ref{sectioncllt} is devoted to proving the conditional local limit theorem of $S_n$. In Section \ref{section3}, the occupation time of $S_n$ is represented as an ergodic partial sum by introducing a skew product transformation $\tilde{T}$ on $\Omega\times \mathbb{R}$. The skew product has an ergodic decomposition, and each component is pointwise dual ergodic.  It follows that the normalized occupation time of $S_n$ converges to    Mittag-Leffler distribution as described in Theorem \ref{main}.

\section{Conditional Local Limit Theorem}\label{sectioncllt}

\subsection{Proof of the conditional local limit theorem}

In this part, we state two claims   which are the key points  in the proof of Theorem \ref{cll} and provide the proof modulo these conditions.

\begin{proof}{ of Theorem \ref{cll}}.
For fixed  positive integers $k$ and $n$, the conditional probability $P\big(S_n\in(q_n+a,q_n+b)|(X_{n+1},X_{n+2},...X_{n+k})\big)$ is given by  a normal distribution.
Indeed,
let  the $(n+k)$-dimensional random variable $X=(X_1, \cdots,X_{n+k})^T$ be partitioned as $
\begin{bmatrix}
X_{[1]}\\
X_{[2]}
\end{bmatrix}
$
with sizes $n$ and $k$ respectively.
The covariance matrix of $X$ is denoted by
$$
\Sigma=
\begin{bmatrix}
\Sigma_{11}(n,n) &\Sigma_{12}(n,k)\\
\Sigma_{21}(k,n) & \Sigma_{22}(k,k)
\end{bmatrix}
,$$
where $\Sigma_{11}$ and $\Sigma_{22}$ are symmetric Toeplitz matrixes, 
$\Sigma_{11}(n,n)=[b(|i-j|)]_{0\le i,j<n}$,
$\Sigma_{22}(k,k)=[b(|i-j|)]_{0\le i,j<k},$
and  where
$\Sigma_{12}(n,k)=\Sigma_{21}^{T}(k,n)= [b(n-i+j)]_{0\le i<n; 0\le j<k}$.

Let $D$ be the $(k+1) \times(n+k)$ matrix, defined by
$D=
\begin{bmatrix}
    e(n)&0,\cdots,0\\
    0&I_k
\end{bmatrix}
$, where 
$e(n)=\underbrace{(1,1,...,1)}_{n}$ and let $I_k$ be the identity matrix of dimension $k$. Then  $DX\sim \mathcal{N}(0, D\Sigma D^T)$, i.e. 
$$\begin{bmatrix}
S_n\\

X_{[2]}
\end{bmatrix}
\sim \mathcal{N}\bigg(0, 
\begin{bmatrix}
e(n)\Sigma_{11}e(n)^T& e(n)\Sigma_{12}\\
\Sigma_{21}e(n)^T& \Sigma_{22}
\end{bmatrix}
\bigg).$$ By the  conditional normal formula (see for example \cite{MNC}, Section 5.5), when $\Sigma_{22}$ is of full rank, 
$$(S_n|X_{[2]})\sim \mathcal{N}\big(\mu(n,k),\sigma^2(n,k)\big),$$ 
where  
\begin{equation*}
\mu(n,k)=e(n)\Sigma_{12}(n,k) \Sigma_{22}^{-1}(k,k)X_{[2]}
\end{equation*}
and
\begin{equation*}
\sigma^2(n,k)=e(n)\Sigma_{11}(n,n)e(n)^T-e(n)\Sigma_{12}(n,k)\Sigma^{-1}_{22}(k,k)\Sigma_{21}(k,n)e(n)^T.
\end{equation*}
That is, $P(S_n\in A|X_{[2]})=\int_{A} f(y_1|X_{[2]})dy_1$, with $f(y_1|X_{[2]})=\dfrac{1}{\sqrt{2\pi}\sigma(n,k)}e^{-\frac{(y_1-\mu(n,k))^2}{2 \sigma^2(n,k)}}$.

Let $B=(B(1), B(2), \cdots, B(k))^T:=\Sigma_{21}(k,n)e(n)^T,$ so $B(s)=\sum_{i=s}^{n+s-1}b(i)$.
Then the mean and the variance become
$$\mu(n,k)=B^T\Sigma_{22}^{-1}(k,k)X_{[2]}$$ and 
 $$\sigma^2(n,k)=e(n)\Sigma_{11}e(n)^T-B^T\Sigma^{-1}_{22}B.$$ 
 
 It follows that
\begin{eqnarray*}
&&\sigma(n,k)P\bigg(S_n\in(q_n+a,q_n+b)|(X_{n+1},X_{n+2},...X_{n+k})\bigg)\\
&=&\dfrac{1}{\sqrt{2\pi}} \int_{a+q_n}^{b+q_n} \exp\bigg(-\frac{(x-B^T \Sigma_{22}^{-1}(k,k)X_{[2]})^2}{2\sigma^2(n,k)}\bigg)dx\\
&=&\sigma(n,k)\dfrac{1}{\sqrt{2\pi}} \int_{(a+q_n)/\sigma(n,k)}^{(b+q_n)/\sigma(n,k)} \exp\bigg(-\dfrac{1}{2}(y-\dfrac{1}{\sigma(n,k)}B^T \Sigma_{22}^{-1}(k,k)X_{[2]})^2\bigg)dy\\
&=&\sigma(n,k)\dfrac{1}{\sqrt{2\pi}} (b-a)/\sigma(n,k) \exp\bigg(-\dfrac{1}{2}(\frac{\xi+q_n}{\sigma(n,k)}-\dfrac{1}{\sigma(n,k)}B^T \Sigma_{22}^{-1}(k,k)X_{[2]})^2\bigg),
\end{eqnarray*}
where  the mean value theorem  is used in the last step  and $\xi\in [a,b]$. 

Now we make two claims which will be proved in  Sections \ref{var} and \ref{mean} below.

\begin{description}
\item[Claim 1:]\label{Claim1} For fixed $n$,  $d_n^2=\displaystyle{\lim_{k\to \infty}}\sigma^2(n,k)$ exists and   
$d_n^2=n^{2H}L(n)$ as $n\to \infty,$ where $L(n)$ is slow varying and converges to a constant.

\item[Claim 2:] \label{Claim2}
For fixed $n$, $
\displaystyle{\lim_{k\to \infty}}\dfrac{1}{\sigma(n,k)}B^T\Sigma_{22}^{-1}(k,k)X_{[2]}= 0$
almost surely.
\end{description}

 As a consequence,
$
\displaystyle{\lim_{k\to \infty}}\frac{\xi+q_n}{\sigma(n,k)}=\frac{\xi}{d_n}+\frac{q_n}{d_n}=:\kappa(n).
$
Since $\xi\in[a,b]$ and $\frac{q_n}{d_n}\to \kappa$ as $n\to \infty$,
$
\lim_{n\to \infty}\kappa(n)=\kappa.
$

Hence
 \begin{eqnarray*}
&&\lim_{k\to \infty}\sigma(n,k)P\bigg(S_n\in(q_n+a,q_n+b)|(X_{n+1},X_{n+2},...X_{n+k})
\bigg)\\&=&\dfrac{1}{\sqrt{2\pi}}(b-a)\exp(-\dfrac{\kappa(n)^2}{2})=g(\kappa(n))(b-a),
\end{eqnarray*}
where $g$ is the density function of the standard normal random variable.

On the other hand, by Doob's martingale convergence theorems, almost surely,
$$\lim_{k\to \infty}\sigma(n,k)P\bigg(S_n\in(q_n+a,q_n+b)|(X_{n+1},X_{n+2},...,X_{n+k})
\bigg)=d_nP\bigg(S_n\in(q_n+a,q_n+b)|(X_{n+1},X_{n+2},...)
\bigg).$$
Hence almost surely,
$$d_nP\bigg(S_n\in(q_n+a,q_n+b)|(X_{n+1},X_{n+2},...)
\bigg)=(b-a)g(\kappa(n))$$
It follows that
$$\lim_{n\to \infty}d_n P\bigg(S_n\in(q_n+a,q_n+b)|(X_{n+1},X_{n+2},...)
\bigg)=g(\kappa)(b-a).$$

When $q_n=0$, almost surely,
$$d_nP\bigg(S_n\in(a,b)|(X_{n+1},X_{n+2},...)
\bigg)=(b-a)g(0).$$
So
$$\lim_{n\to \infty}d_n P\bigg(S_n\in(a,b)|(X_{n+1},X_{n+2},...)
\bigg)=g(0)(b-a),$$
uniformly for almost all $\omega\in \Omega$ and $(a,b).$

In the following sections, we give the proof of the two claims. 
\end{proof}

\subsection{Estimating the variance}\label{var}
In this part, we prove Claim 1: $\displaystyle{\lim_{k\to \infty}}\sigma^2(n,k)=d^2_n=n^{2H}L(n)$. 
Since  by definition of the $b(i)$ we have that the first term of $\sigma^2(n,k)$, $e(n)\Sigma_{11}(n,n)e(n)^T=n^{2H}L(n)$, it is sufficient to prove that the second term of $\sigma^2(n,k)$ converges to $0$ as $k\to \infty$, i.e. $$\lim_{k\to \infty}B^T\Sigma^{-1}_{22}(k,k)B=0.$$  We shall write $\Sigma_{22}$ for $\Sigma_{22}(k,k)$ in this subsection to simplify notation.
First we give an estimate of the element $B(s)=\sum_{i=s}^{n+s-1}b(i)$ of the vector $B$.
\begin{lemma}\label{BS}  It holds that
 $$B(s)=\binom{2H}{2} n(s+n)^{2H-2}\bigg(1+O(\dfrac{1}{s})\bigg), \text{ as $s\to \infty$}$$
 and therefore  $\sum_{i=1}^kB^2(i)=O(n^2 (k+n)^{4H-3})$ as $k\to \infty.$
 \end{lemma}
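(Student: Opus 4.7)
The plan is to exploit the fact that $b(i)$ is, up to the factor $1/2$, the discrete second difference of the smooth function $F(x)=x^{2H}$, so that the sum defining $B(s)$ telescopes cleanly. Writing
\[
b(i)=\tfrac{1}{2}\bigl([F(i+1)-F(i)]-[F(i)-F(i-1)]\bigr)
\]
and summing over $i=s,\dots,s+n-1$, all interior first differences cancel, leaving the closed form
\[
B(s)=\tfrac{1}{2}\bigl[(s+n)^{2H}-(s+n-1)^{2H}-s^{2H}+(s-1)^{2H}\bigr].
\]
The entire $n$-term sum is reduced to four boundary evaluations, and everything afterwards is Taylor expansion of these four terms around $s$.

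Fixing $n$ and letting $s\to\infty$, I would expand each $(s+m)^{2H}$ for $m\in\{-1,0,n-1,n\}$ as
\[
(s+m)^{2H}=s^{2H}+2Hm\,s^{2H-1}+\tbinom{2H}{2}m^{2}\,s^{2H-2}+O\!\left(s^{2H-3}\right).
\]
Substituting into the telescoped formula, the coefficients of $s^{2H}$ and $s^{2H-1}$ vanish by the identities $1-1-1+1=0$ and $n-(n-1)-0+(-1)=0$. At order $s^{2H-2}$ the surviving coefficient is $\binom{2H}{2}\bigl[n^{2}-(n-1)^{2}-0+1\bigr]=2n\binom{2H}{2}$, so after the factor $1/2$ I obtain
\[
B(s)=\binom{2H}{2}\,n\,s^{2H-2}\bigl(1+O(1/s)\bigr).
\]
Because $(s+n)^{2H-2}=s^{2H-2}(1+n/s)^{2H-2}=s^{2H-2}(1+O(1/s))$ for fixed $n$, I can trade $s^{2H-2}$ for $(s+n)^{2H-2}$ at the cost of another $O(1/s)$ factor, which gives the first assertion.

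For the sum bound I would promote the pointwise asymptotic to a uniform upper estimate of the form $B(s)\le C\,n\,(s+n)^{2H-2}$ valid for all $s\ge 1$, where $C$ depends only on $H$. Such an estimate follows from comparing $B(s)=\sum_{i=s}^{s+n-1}b(i)$ with the integral $\int_{s}^{s+n}x^{2H-2}\,dx=((s+n)^{2H-1}-s^{2H-1})/(2H-1)$ after using $b(i)=O(i^{2H-2})$ for $i\ge 1$, together with $(s+n)^{2H-1}-s^{2H-1}\asymp n(s+n)^{2H-2}$ in every regime of $n$ versus $s$. Squaring and summing,
\[
\sum_{i=1}^{k}B^{2}(i)\le C^{2}n^{2}\sum_{i=1}^{k}(i+n)^{4H-4},
\]
and since the hypothesis $3/4<H<1$ gives $4H-4\in(-1,0)$ and $4H-3\in(0,1)$, comparison with $\int_{1}^{k+n}x^{4H-4}\,dx$ yields $\sum_{i=1}^{k}(i+n)^{4H-4}=O\!\left((k+n)^{4H-3}\right)$, completing the proof.

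The step I expect to be the main obstacle is verifying the uniform estimate $B(s)\le Cn(s+n)^{2H-2}$ with a constant independent of $n$. A naive application of the mean value theorem to the telescoped expression produces a factor $\xi^{2H-2}$ for some $\xi\in(s-1,s+n)$, which is only controlled by the too-weak bound $(s-1)^{2H-2}$ in the regime $n\gg s$. Splitting into the subregimes $n\le s$ and $n>s$, or working directly with the integral comparison $(s+n)^{2H-1}-s^{2H-1}$ above, should resolve this, but is where the argument needs the most bookkeeping.
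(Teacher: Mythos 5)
Your proposal is correct and follows essentially the same route as the paper: both reduce $B(s)$ to the telescoped four-term expression $\tfrac12[(s+n)^{2H}-(s+n-1)^{2H}-s^{2H}+(s-1)^{2H}]$ and then Taylor-expand, the only organizational difference being that the paper expands jointly in $s$ and $n$ (in powers of $(sn)^{-1}$) so as to land directly on $(s+n)^{2H-2}$ with an error uniform in $n$, while you expand around $s$ for fixed $n$ and then trade $s^{2H-2}$ for $(s+n)^{2H-2}$. Your integral-comparison argument for $\sum_{i=1}^k B^2(i)=O(n^2(k+n)^{4H-3})$ is sound and in fact supplies the uniform upper bound that the paper leaves implicit.
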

 
 \begin{proof} By Taylor expansion, $(1+x)^a=\sum_{i=0}^{\infty} \binom {a}{i}x^{i}$ when $|x|<1$, where $\binom{a}{i}=\dfrac{a(a-1)\cdots(a-i+1)}{i!}$.   So by definition of $B(s)$ and $b(t)$,
 \begin{eqnarray}\label{eq:2.1}
 && 2 (sn)^{-2H}B(s)= (sn)^{-2H} \left[(s+n)^{2H}-(s+n-1)^{2H} -s^{2H}+(s-1)^{2H}\right] \\
 &=& \left(1-\frac{sn-s-n}{sn}\right)^{2H}-\left(1-\frac{sn-s-n+1}{sn}\right)^{2H} -\left(1-\frac{sn-s}{sn}\right)^{2H}+\left(1-\frac{sn-s+1}{sn}\right)^{2H} \notag\\
  &=& \sum_{i=2}^{\infty} \binom {2H} {i}(-1)^i f(s,n,i)(sn)^{-i}\notag
 \end{eqnarray}
 where $f(s,n,i)=(sn-s-n)^i-(sn-s-n+1)^i-(sn-s)^i+(sn-s+1)^i.$
 Using  the binomial formula, we can rewrite 
 $f(s,n,i)=\sum_{j=1}^i \binom{i}{j}\bigg( (sn-s)^{i-j}-(sn-s-n)^{i-j}\bigg).$
Since 
\begin{eqnarray*}
 &&(sn-s)^{i-j}-(sn-s-n)^{i-j}= \sum_{l=1}^{i-j}\binom{i-j}{l}(sn-s-n)^{i-j-l}n^l\\
 &=&n(i-j) (sn-s-n)^{i-j-1}\left(1+O\left(\dfrac{n}{sn-s-n}\right)\right), \text{as $\dfrac{n}{sn-s-n}\to 0,$}\\
\end{eqnarray*}
a straight forward calculation furthermore shows that
\begin{eqnarray*}
f(s,n,i)=i(i-1)n(sn-s-n)^{i-2}\left(1+O\left(\dfrac{n}{ns-s-n}\right)\right)\\
\end{eqnarray*}
 as $\dfrac{n}{sn-s-n}\to 0$ and $\dfrac{1}{sn-s-n}\to 0.$

Inserting into equation (\ref{eq:2.1}) we arrive at
 \begin{eqnarray*}
 && 2 (sn)^{-2H}B(s)= \sum_{i=2}^{\infty} \binom {2H} {i}(-1)^ii(i-1)n(sn-s-n)^{i-2}(sn)^{-i}(1+O(\dfrac{n}{ns-s-n}))\\
 &=&(2H)(2H-1) \dfrac{1}{s^2n} (\frac{1}{s}+\dfrac{1}{n})^{2H-2}\left(1+O\left(\dfrac{n}{ns-s-n}\right)\right).
  \end{eqnarray*}
 This shows that
$B(s)=\binom{2H}{2} n(s+n)^{2H-2}(1+O(\dfrac{1}{s}))$ as $s\to \infty.$ \QEDA
\end{proof}
 
  The main idea of estimating $B^T\Sigma_{22}^{-1}B$ is to write
\begin{eqnarray*}
B^T\Sigma_{22}^{-1}B=c(k)B^TA^{-1}B=c(k)B^T\sum_{l=0}^{\infty} (I-A)^lB=c(k)\sum_{l=0}^{\infty}B^T (I-A)^lB,
\end{eqnarray*}
where $c(k)$ is an appropriate constant chosen below satisfying   $||I-A||_2<1$ for $A=c(k)\Sigma_{22}$.

We consider  the minimal and maximal eigenvalues of $\Sigma_{22}$, denoted by $\lambda_{\min}(\Sigma_{22})$  and $\lambda_{\max}(\Sigma_{22})$, before determining $c(k)$, which are closely related to the norm of $\Sigma_{22}^{-1}.$

\begin{lemma}
 Suppose $H>\frac{1}{2}$, then $\lambda_{\min}(\Sigma_{22})\to c=\mbox{\rm essinf}\, f>0$ as $k\to \infty$. 
\end{lemma}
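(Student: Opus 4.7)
The plan is to recognize $\Sigma_{22}=\Sigma_{22}(k,k)$ as the $k$-th Toeplitz matrix $T_{k}(f)$ generated by the spectral density $f$ of the stationary Gaussian sequence $\{X_n\}$, and then to apply the classical Szeg\H{o}--Grenander theorem on the minimum eigenvalue of a Toeplitz matrix.

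First I would invoke Herglotz's theorem to obtain the spectral representation $b(t)=\int_{-\pi}^{\pi}e^{it\lambda}f(\lambda)\,d\lambda$. For fractional Gaussian noise, the spectral density is explicit; up to a positive normalizing constant $C_{H}$ it is
$$f(\lambda)=C_{H}\,(1-\cos\lambda)\sum_{m\in\mathbb{Z}}|\lambda+2\pi m|^{-2H-1},$$
and lies in $L^{1}(\mathbb{T})$ since $b(0)=\int f<\infty$. Thus $\Sigma_{22}=T_{k}(f)$, and the Rayleigh quotient gives
$$\lambda_{\min}(T_{k}(f))=\min_{\|p\|_{2}=1,\,p\in\mathcal{P}_{k}}\int_{-\pi}^{\pi}f(\lambda)|p(\lambda)|^{2}\,\frac{d\lambda}{2\pi},$$
where $\mathcal{P}_{k}$ consists of trigonometric polynomials of degree less than $k$. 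This sequence is monotone non-increasing in $k$ by Cauchy interlacing, is trivially bounded below by $\operatorname{essinf} f$, and I would prove the matching upper bound by choosing unit-norm Fej\'er-type polynomials $p_{k}$ that concentrate $|p_{k}|^{2}$ near a Lebesgue point where $f$ is arbitrarily close to $\operatorname{essinf} f$; together this yields $\lambda_{\min}(T_{k}(f))\to\operatorname{essinf} f$.

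Next I would check $\operatorname{essinf} f>0$ by direct inspection of the explicit formula. For $H>1/2$ the series $\sum_{m}|\lambda+2\pi m|^{-2H-1}$ is continuous and strictly positive on $(-\pi,\pi]\setminus\{0\}$ and diverges at the origin (since $2H+1>2$). Because $1-\cos\lambda\sim\lambda^{2}/2$ as $\lambda\to 0$, the product behaves like $|\lambda|^{1-2H}$, which tends to $+\infty$ when $H>1/2$. Hence $f$ is continuous and strictly positive away from $0$ and blows up at $0$, so $f\geq c>0$ almost everywhere and $\operatorname{essinf} f>0$.

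The main obstacle I expect is that the cleanest form of the Szeg\H{o}--Grenander theorem assumes $f\in L^{\infty}(\mathbb{T})$, whereas long-range dependence ($H>1/2$) forces $f$ to be unbounded at the origin. I would deal with this either by approximating $f$ from below by the bounded truncations $f_{N}=f\wedge N$ and using $T_{k}(f)\succeq T_{k}(f_{N})$ to reduce to the bounded case before letting $N\to\infty$, or by arranging the test polynomials $p_{k}$ so that $|p_{k}|^{2}$ is supported away from the singularity of $f$ (which is harmless because $\operatorname{essinf} f$ is attained on a set of positive measure bounded away from the origin).
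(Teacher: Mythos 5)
Your proposal is correct and follows essentially the same route as the paper: identify $\Sigma_{22}$ as the Toeplitz matrix generated by the spectral density $f$ of fractional Gaussian noise, invoke the Grenander--Szeg\H{o} limit theorem $\lambda_{\min}(T_k(f))\to\operatorname{essinf} f$, and verify $\operatorname{essinf} f>0$ from the explicit formula for $f$. The only difference is that the paper simply cites Grenander--Szeg\H{o} (pp.~64--65) for the eigenvalue limit, including the unbounded case, whereas you sketch a proof of that cited result; your handling of the singularity of $f$ at the origin is sound but not needed beyond the citation.
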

\begin{proof} 
One can define\cite{Hong} the power spectrum (see \cite{CFS}, chapter 14) with a singularity at $\lambda=0$ by 
\begin{equation*} 
f(\lambda):=\sum_{k=-\infty}^{\infty}b(k)e^{-i2\pi \lambda k}, -\frac{1}{2}<\lambda<\frac{1}{2},
\end{equation*}
(also known as the spectral density function or spectrum of the stationary process $\{X_n\}$)
where $b(k)=E(X_nX_{n+k})$ is the covariance function as before. $f(\lambda)$ has an inverse transformation:
$b(k)=\int_{-1/2}^{1/2} f(\lambda) e^{2\pi ik\lambda}d\lambda.$
For the fractional Gaussian noise $\{X_k\}$, $f(\lambda)$ has the form (cf. \cite{Hong}, Section 2.3):
\begin{equation*}
f(\lambda)=C|1-e^{i2\pi \lambda}|^2\sum_{m=-\infty}^{\infty}\dfrac{1}{|\lambda+m|^{1+2H}}=C(1-\cos(2\pi \lambda))\sum_{m=-\infty}^{\infty}\dfrac{1}{|\lambda+m|^{1+2H}}.
\end{equation*}

From \cite{Toeplitz} page 64/65, $\lim_{k\to \infty}\lambda_{\min}(\Sigma_{22})= \mbox{\rm essinf}\, f$ and $\lim_{k\to \infty}\lambda_{\max}(\Sigma_{22})=\mbox{\rm esssup}\, f$ (including the cases $\pm\infty$). Since $\mbox{\rm essinf}_{\lambda}\,f(\lambda)>0$, the lemma is proved.
 \QEDA
\end{proof}

\begin{lemma}
Let $c(k)=\frac{1}{mk^{2H-1} }$, where m is a  constant independent of $k$. If $m$ is large enough then $||I-A||_2=||I-c(k)\Sigma_{22}||_2<1$.
\end{lemma}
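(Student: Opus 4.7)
The plan is to reduce the spectral norm bound to a question about eigenvalues of $\Sigma_{22}$, then use the asymptotics of the covariance $b(t)$ from the fractional Gaussian noise to control the top eigenvalue by $O(k^{2H-1})$.

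First, since $\Sigma_{22}$ is symmetric, so is $I-A=I-c(k)\Sigma_{22}$, and therefore
$$\|I-A\|_2=\max_{j}|1-c(k)\lambda_j(\Sigma_{22})|.$$
Hence $\|I-A\|_2<1$ is equivalent to $0<c(k)\lambda_j(\Sigma_{22})<2$ for every eigenvalue $\lambda_j$. The lower bound is automatic since $\Sigma_{22}$ is a (non-degenerate) covariance matrix, and the previous lemma actually gives $\lambda_{\min}(\Sigma_{22})\to\mathrm{essinf}\,f>0$. So the real work is to verify that, for $m$ large enough,
$$c(k)\lambda_{\max}(\Sigma_{22})=\frac{\lambda_{\max}(\Sigma_{22})}{mk^{2H-1}}<2$$
uniformly in $k$.

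Next, I would bound $\lambda_{\max}(\Sigma_{22})$ by the operator $\infty$-norm, which for a symmetric matrix dominates the spectral norm:
$$\lambda_{\max}(\Sigma_{22})\le \|\Sigma_{22}\|_\infty=\max_{0\le i<k}\sum_{j=0}^{k-1}|b(i-j)|\le 1+2\sum_{l=1}^{k-1}|b(l)|.$$
From the definition $b(t)=\tfrac12[(t+1)^{2H}-2t^{2H}+(t-1)^{2H}]$, a two-term Taylor expansion (the same kind of computation used in the proof of Lemma \ref{BS}) gives the well-known asymptotic
$$b(l)\sim H(2H-1)\,l^{2H-2},\qquad l\to\infty.$$
Since $2H-2>-1$, the partial sums satisfy $\sum_{l=1}^{k-1}l^{2H-2}\sim \frac{k^{2H-1}}{2H-1}$, so there exists an absolute constant $C_0$ (depending only on $H$) with
$$\lambda_{\max}(\Sigma_{22})\le C_0\, k^{2H-1}\quad\text{for all }k\ge 1.$$

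Finally, choose any $m>C_0/2$ (say $m=C_0$). Then for every $k\ge 1$,
$$c(k)\lambda_{\max}(\Sigma_{22})\le \frac{C_0}{m}<2,$$
together with the positivity of $\lambda_{\min}(\Sigma_{22})$ this gives $|1-c(k)\lambda_j|<1$ for every eigenvalue, i.e. $\|I-A\|_2<1$, which is what was claimed.

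The main obstacle is producing the sharp order of growth $\lambda_{\max}(\Sigma_{22})=O(k^{2H-1})$; this is exactly the scale that makes $c(k)=1/(mk^{2H-1})$ work, and it is tight because the spectral density $f$ has a $|\lambda|^{1-2H}$ singularity at the origin. Fortunately, the row-sum estimate together with the decay $b(l)\sim H(2H-1)l^{2H-2}$ is enough and avoids having to invoke the full Toeplitz/Szegő spectral asymptotics for singular symbols.
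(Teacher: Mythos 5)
Your proof is correct and follows essentially the same route as the paper: both reduce $\|I-c(k)\Sigma_{22}\|_2$ to the eigenvalues $1-c(k)\lambda_j(\Sigma_{22})$ and bound $\lambda_{\max}(\Sigma_{22})=O(k^{2H-1})$ by a row-sum bound, which for the symmetric matrix $\Sigma_{22}$ is exactly the paper's $\sqrt{\|\Sigma_{22}\|_1\|\Sigma_{22}\|_\infty}$. Your write-up is merely a bit more explicit about where the order $k^{2H-1}$ comes from (the asymptotic $b(l)\sim H(2H-1)l^{2H-2}$ summed over $l<k$) and about the strict positivity of $\lambda_{\min}(\Sigma_{22})$ needed for the lower side of the inequality $|1-c(k)\lambda_j|<1$.
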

\begin{proof}
By Lemma \ref{BS}, on the one hand,
$
||\Sigma_{22}||_2\leq \sqrt{||\Sigma_{22}||_1||\Sigma_{22}||_{\infty}}=O(k^{2H-1}),
$  and on the other hand, $\dfrac{B^T\Sigma_{22}B}{B^TB}\leq \lambda_{\max}(\Sigma_{22})=||\Sigma_{22}||_2$ and  $\dfrac{B^T\Sigma_{22}B}{B^TB}=O(k^{2H-1})$. Hence
$||\Sigma_{22}||_2=O(k^{2H-1}).$

The eigenvalues  of $I-A:=I-c(k)\Sigma_{22}$ are $\{1-c(k)\lambda_{i}(\Sigma_{22})\}$, $\lambda_i(\Sigma_{22})$ denoting the eigenvalues of $\Sigma_{22}$. Therefore, choosing $m$ large enough,  $|1-c(k)\lambda_{\max}(\Sigma_{22})|$ and $|1-c(k)\lambda_{\min}(\Sigma_{22})|$ can be both made less than 1, idependently of $k$. Hence $||I-c(k)\Sigma_{22}||_2<1$. \QEDA
 \end{proof}
 
 With the preparation above, we can return to the estimate of $B^T\Sigma_{22}^{-1}B=c(k)\sum_{l=0}^{\infty}B^T (I-c(k)\Sigma_{22})^lB$.

\begin{lemma}\label{cov}
If  $\frac{3}{4}<H<1$, then
\begin{equation*}
\lim_{k\to \infty}B^T\Sigma_{22}^{-1}B=0.
\end{equation*}
\end{lemma}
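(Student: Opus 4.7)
The plan is to parlay the Neumann series representation $\Sigma_{22}^{-1}=c(k)\sum_{l\geq 0}(I-c(k)\Sigma_{22})^{l}$ set up above, whose convergence is guaranteed by the preceding lemma, into a clean scalar estimate for $B^{T}\Sigma_{22}^{-1}B$. Writing $A:=c(k)\Sigma_{22}$, the preceding lemma supplies a constant $\rho<1$, independent of $k$, such that $\|I-A\|_{2}\leq \rho$. Because $\Sigma_{22}$ (hence $I-A$) is symmetric, this spectral bound upgrades to $\|(I-A)^{l}\|_{2}\leq \rho^{l}$, and Cauchy--Schwarz then gives the termwise bound $B^{T}(I-A)^{l}B\leq \rho^{l}\|B\|_{2}^{2}$. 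Summing the resulting geometric series collapses the problem to
\[
B^{T}\Sigma_{22}^{-1}B\;\leq\; \frac{c(k)}{1-\rho}\,\|B\|_{2}^{2}.
\]

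It then remains to verify that $c(k)\|B\|_{2}^{2}\to 0$ as $k\to\infty$ for each fixed $n$. For this I would invoke Lemma \ref{BS}, which records the estimate $\sum_{i=1}^{k}B^{2}(i)=O\!\bigl(n^{2}(k+n)^{4H-3}\bigr)$. Combined with $c(k)=1/(mk^{2H-1})$ this yields
\[
c(k)\|B\|_{2}^{2}\;=\;O\!\left(\frac{n^{2}(k+n)^{4H-3}}{k^{2H-1}}\right)\;=\;O\!\bigl(n^{2}k^{2H-2}\bigr)
\]
as $k\to\infty$, and $H<1$ forces the exponent $2H-2$ to be strictly negative, so the bound decays to $0$ and the lemma follows.

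The only conceptual subtlety is that the spectral gap $\rho<1$ must be \emph{uniform} in $k$; without this one could not pass the geometric series bound through the limit $k\to\infty$. That uniformity, however, is precisely what the previous lemma delivers, so no extra work is needed. The hypothesis $3/4<H<1$ enters only implicitly: the lower bound $H>3/4$ is what makes Lemma \ref{BS} produce the favourable $(k+n)^{4H-3}$ rate on $\|B\|_2^2$, while the upper bound $H<1$ is what finally makes $k^{2H-2}$ decay. Apart from these two inputs, the argument is a short manipulation of the Neumann series, and I expect no further obstacles.
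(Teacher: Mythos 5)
There is a genuine gap, and it sits exactly at the step you flag as the ``only conceptual subtlety.'' The preceding lemma does \emph{not} supply a spectral radius $\rho<1$ uniform in $k$. The eigenvalues of $I-A=I-c(k)\Sigma_{22}$ are $1-c(k)\lambda_i(\Sigma_{22})$; since $\lambda_{\min}(\Sigma_{22})\to \mbox{essinf}\,f=:c_0>0$ stays bounded while $c(k)=1/(mk^{2H-1})\to 0$, the largest eigenvalue of $I-A$ is
\[
1-c(k)\lambda_{\min}(\Sigma_{22})\;=\;1-\frac{c_0}{mk^{2H-1}}\bigl(1+o(1)\bigr)\;\longrightarrow\;1 .
\]
So $\|I-A\|_2<1$ for each fixed $k$ (which is all the lemma asserts, and all that is needed for the Neumann series to converge), but $\|I-A\|_2\to 1$ as $k\to\infty$. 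If you run your argument with the true, $k$-dependent ratio $\rho_k=\|I-A\|_2$, the geometric series contributes $1/(1-\rho_k)\sim mk^{2H-1}/c_0$, which exactly cancels the factor $c(k)$ and leaves the trivial bound
\[
B^T\Sigma_{22}^{-1}B\;\le\;\frac{\|B\|_2^2}{\lambda_{\min}(\Sigma_{22})}\;=\;O\bigl(n^2 k^{4H-3}\bigr),
\]
which \emph{diverges} for $H>3/4$. So the approach does not merely need a patch; any argument that uses only $\|B\|_2$ and the spectrum of $\Sigma_{22}$ is doomed here, because the quantity being controlled genuinely depends on how $B$ is oriented relative to the eigenvectors of $\Sigma_{22}$ (the mass of $B$ must not concentrate on the near-null directions of $A$).

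This is precisely why the paper's proof is so much longer. It tracks the scalar sequence $\sum_s B(s)V^{(l)}(s)$ through the iteration $V^{(l)}=(I-c(k)\Sigma_{22})V^{(l-1)}$ and shows, using the explicit decay $B(s)\asymp n(s+n)^{2H-2}$ from Lemma \ref{BS}, that the effective contraction factor $1-q(s)$ is bounded away from $1$ by $q\asymp n^{2-2H}k^{-(2-2H)(1-\gamma)}$ on the range $s\ge s^*$ where most of the mass of $B$ lives; the stopping-time $L^*$ argument handles the possibility that mass migrates to small $s$ under the iteration. Balancing the two resulting error terms over $\gamma\in(0,1)$ gives $B^T\Sigma_{22}^{-1}B=O(k^{-(1-\gamma)(4H-3)}+k^{-2\gamma(1-H)})\to 0$. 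Your identification of where $H>3/4$ and $H<1$ enter is also off as a consequence: both bounds on $H$ are needed to make the two exponents above simultaneously negative, not merely to control $\|B\|_2^2$ and $c(k)$.
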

It follows from the lemma that $d_n^2:=\lim_{k\to\infty}\sigma^2(n,k)=O(n^{2H})$.
\begin{proof}
We first derive an recursive equation, which will be used frequently.

For any k-dimensional column vector $V=(V(1), V(2), \cdots, V(k))^T$,  define $V^{(l)}:=(I-c(k)\Sigma_{22})^lV=(I-c(k)\Sigma_{22})V^{(l-1)}$,  $V^{(0)}=V$. Recall that $B^T=(B(1), B(2), \cdots, B(k))$, then
\begin{eqnarray*}
B^TV^{(l)}&=&B^T(I-c(k)\Sigma_{22})V^{(l-1)}\\
&=&B^TV^{(l-1)}-c(k)B^T\Sigma_{22}V^{(l-1)}\\
&=& \sum_{s=1}^k V^{(l-1)}(s) B(s)\bigg(1- c(k) \sum_{i=1}^k\frac{B(i)}{B(s)}b(i-s)\bigg).
\end{eqnarray*}
Hence we get a recursive equation for any vector $V$:
\begin{eqnarray}\label{induction}
\sum_{s=1}^k B(s) V^{(l)}(s)&=& \sum_{s=1}^kB(s) V^{(l-1)}(s) \bigg(1-q(s)\bigg), \text{$l\geq 1$}
\end{eqnarray}
where
\begin{equation*}\label{q}
q(s)= c(k)\sum_{i=1}^k\frac {B(i)}{B(s)}b(i-s)=c(k)\left(1+\sum_{1\leq i\leq k, i\neq s}\frac {B(i)}{B(s)}b(i-s)\right)
\end{equation*}

By Lemma \ref{BS} and $b(t)=\binom{2H}{2} |t|^{2H-2}(1+\frac{\binom{2H}{4}}{\binom{2H}{2}}\frac{1}{t^2}+\cdots)$ for $t\geq 1,$ 
\begin{eqnarray*}
q(s)&\geq& c(k) \bigg(\int_{2}^{s-2} \frac{B(i)}{B(s)}b(s-i)di +\int_{s+2}^{k-1} \frac{B(i)}{B(s)}b(s-i)di\bigg)\\
&\geq& c(k) \bigg(C\int_{2}^{s-2} \frac{(i+n)^{2H-2}}{(s+n)^{2H-2}}(s-i)^{2H-2}di +\int_{s+2}^{k-1} \frac{(i+n)^{2H-2}}{(s+n)^{2H-2}}(i-s)^{2H-2}di\bigg)\\
&=&\frac{C}{m} (\frac{n+s}{k})^{2-2H} \bigg(\int_{\frac{2}{k}}^{\frac{s-2}{k}} (x+\frac{n}{k})^{2H-2}(\frac{s}{k}-x)^{2H-2}dx+\int_{\frac{s+2}{k}}^{\frac{k-1}{k}} (x+\frac{n}{k})^{2H-2}(x-\frac{s}{k})^{2H-2}dx\bigg) \\
&=&\frac{1}{m} (\frac{n+s}{k})^{2-2H}  I(k,s,n).
\end{eqnarray*}
The integral $I(k,s,n)$ is bounded below by some constant $K$ independent of $k, s$ and $n$. 

For all $s$ satisfying 
\begin{equation*}\label{ineq}
 (s+n)^{2-2H}\geq n^{2-2H}k^{\gamma(2-2H)}
\end{equation*}
where $\gamma\in[0,1]$ is determined below,
that is
$$ s\geq s^*:=nk^{\gamma}-n ,$$
we get
$$ q(s)\geq q:=\frac{n^{2-2H}}{k^{(2-2H)(1-\gamma)}} \frac{K}{m}, \text{ when $s\geq s^{*}$.}$$ 

In the recursive equation (\ref{induction}), put  $V(s)=B(s), s=1,2,\cdots, k$, so $V^{(l)}=(I-c(k)\Sigma_{22})^lB=(I-c(k)\Sigma_{22})V^{(l-1)}$. Then one has
\begin{eqnarray*}
 \sum_{s=1}^k B(s) V^{(l)}(s) &=&  \sum_{s=1}^k \big(1-q(s)\big)B(s)V^{(l-1)}(s)\\
&\leq & \sum_{s=1}^k (1- q) B(s) V^{(l-1)}(s)+  \sum_{s< s^*} (q-q(s))B(s)V^{(l-1)}(s)\\
&\leq&  (1-q)\sum_{s=1}^k B(s)V^{(l-1)}(s)  + (q-\min_{s<s^*}q(s))\sum_{s<s^*}B(s)V^{(l-1)}(s).
\end{eqnarray*}

The idea is to incorporate the second term (when $s<s^*$)   into the first one.
For any $\epsilon>0$, define 
\begin{eqnarray*}\label{second term}
L^*=\inf\{l: \sum_{s=1}^{s^*-1}B(s)V^{(l)}(s)\geq \epsilon \sum_{s=1}^k B(s)V^{(l)}(s)\}.
\end{eqnarray*}

If $L^*=\infty$, then $\sum_{s=1}^{s^*-1}B(s)V^{(l)}(s) < \epsilon \sum_{s=1}^k B(s)V^{(l)}(s)$ for all $l$. If $\epsilon$ is small enough, it follows that for all $l\geq 1,$
\begin{eqnarray*}
 \sum_{s=1}^k B(s) V^{(l)}(s) &\leq&  (1-q) \sum_{s=1}^k B(s)V^{(l-1)}(s)  \\
&+&  \epsilon(q-\min_sq(s)) \sum_{s=1}^k B(s)V^{(l-1)}(s)\\
&=&(1-q^*) \sum_{s=1}^k B(s)V^{(l-1)}(s)\\
&\leq&(1-q^*)^l\sum_{s=1}^k B(s)V^{(0)}(s)
\end{eqnarray*}
where $q^*=q-\epsilon(q-\min_sq(s))$. By Lemma \ref{BS}, for some constants $C_1, C_2>0$
\begin{eqnarray*}
c(k) \sum_{l=1}^{\infty}\sum_{s=1}^k B(s) V^{(l)}(s) &\leq&c(k)\sum_{l=1}^{\infty}(1-q^*)^l\sum_{s=1}^{k} B(s)^2\\
&\leq&  c(k)  \frac{C_1}{q^*}n^2(k+n)^{4H-3}\\
&\leq& c(k) \frac{C_1}{(1-\epsilon)q}n^2(k+n)^{4H-3}\\
&=& C_2 n^{2H} k^{(2-2H)(-\gamma)}.
\end{eqnarray*}
When $l=0$,  for some constant $C_3>0$
$$c(k) \sum_{s=1}^k B(s) V^{(0)}(s)=c(k)\sum_{s=1}^k B^2(s)\leq c(k) mC_3(n^2(k+n)^{4H-3})\leq C_3(k^{-2(1-H)}n^{2}).$$
Hence,
$$B^T\Sigma_{22}^{-1}B=c(k)\sum_{l=0}^{\infty}B^TV^{(l)}= O(k^{-2\gamma(1-H)}), \text{ as $k\to \infty.$}$$
Therefore,  if $L^*=\infty$, then $\lim_{k\to \infty}B^T\Sigma_{22}^{-1}B=0.$

If $L^*<\infty$, then for $l<L^*$, 
\begin{equation*}
\sum_{s=1}^{s^*-1}B(s)V^{(l)}(s)< \epsilon \sum_{s=1}^k B(s)V^{(l)}(s),
\end{equation*}
and 
\begin{equation}\label{ineq}
\sum_{s=1}^{s^*-1}B(s)V^{(L^*)}(s)\geq \epsilon \sum_{s=1}^k B(s)V^{(L^*)}(s).
\end{equation}
In this case, we split $\sum_{l=1}^{\infty}B^T V^{(l)}$ into two parts:
\begin{eqnarray*}
 \sum_{l=1}^{\infty}\sum_{s=1}^k B(s) V^{(l)}(s) &=& \sum_{l=1}^{L^*-1}\sum_{s=1}^k B(s) V^{(l)}(s)+\sum_{l=L^*}^{\infty}\sum_{s=1}^k B(s) V^{(l)}(s).
 \end{eqnarray*}
The first term can be handled with  in the same way as the case when $L^*=\infty$:
 \begin{eqnarray}\label{term1}
c(k) \sum_{l=1}^{L^*-1}\sum_{s=1}^k B(s) V^{(l)}(s)&\leq&c(k)  \sum_{l=1}^{L^*-1} (1-q^*)^l \sum_{s=1}^k B^2(s)\notag\\
&\leq& c(k)\frac{1}{q^*}\sum_{s=1}^k B^2(s)\\
&\leq&C_2 n^{2H} k^{-2\gamma(1-H)}.\notag
 \end{eqnarray}
 For the other term, in the iteration equation (\ref{induction}) take $V$ to be $V_{new}:=(I-c(k)\Sigma_{22})^{L^*}B=V^{(L^*)}$, then by changing variable $d=l-L^*$, one has
 \begin{eqnarray*}
 \sum_{l=L^*}^{\infty} B^T V^{(l)}&=&\sum_{l=L^*}^{\infty} B^T(I-c(k)\Sigma_{22})^{l-L^*} V_{new}
\\
&=&\sum_{l=L^*}^{\infty} B^TV^{(l-L^*)}_{new}
\\
&=&\sum_{d=0}^{\infty} \sum_{s=1}^{k}B(s) V_{new}^{(d)}(s)
\\
&=&\sum_{s=1}^{k}B(s) V_{new}^{(0)}(s)+\sum_{d=1}^{\infty} \sum_{s=1}^{k} (1-q(s)) B(s) V_{new}^{(d-1)}(s)\\
&\leq& \sum_{d=0}^{\infty} (1-\min_{s}q(s))^d\sum_{s=1}^{k}B(s)V^{(L^*)}(s) \\
&\leq&  \sum_{d=0}^{\infty} (1-\min_{s}q(s))^d \cdot\frac{1}{\epsilon}\sum_{s=1}^{s^*-1}B(s)V^{(L^*)}(s) \text{ by (\ref{ineq})} \\ 
&\leq& \frac{1}{\min_{s}q(s)\epsilon} \sum_{s=1}^{s^*-1}B(s)^2\\
&\leq& \frac{C_4}{\min_{s}q(s)} n^{4H-1}k^{\gamma(4H-3)},
\end{eqnarray*}
where $C_4>0$ and Lemma \ref{BS} is used.

Since $q(s)\geq \frac{1}{m} (\frac{n+s}{k})^{2-2H}K,$ $\min_sq(s)\geq \frac Km \frac{n^{2H-2}}{k^{2-2H}}$, one has
\begin{eqnarray}\label{term2}
c(k)\sum_{l=L^*}^{\infty} B^T V^{(l)}\leq c(k) \frac{C_4}{\min_{s}q(s)\epsilon} n^{4H-1}k^{\gamma(4H-3)}=O(k^{-(1-\gamma)(4H-3)}).
\end{eqnarray}

Combining ($\ref{term1}$) and ($\ref{term2}$), one has $B^T\Sigma_{22}^{-1}B\leq C(k^{-(1-\gamma)(4H-3)}+ k^{-2\gamma(1-H)})$.
Since $H\in(\frac{3}{4},1)$, $\lim_{k\to \infty}B^T\Sigma_{22}^{-1}B=0$ follows.\QEDA
\end{proof} 

\subsection{Estimate of the mean}\label{mean}

We continue writing $\Sigma_{22}$ for $\Sigma_{22}(k,k)$.
\begin{lemma}
Suppose $3/4<H<1$, then almost surely, 
\begin{equation*}
\lim_{k\to \infty}B^T\Sigma^{-1}_{22}X_{[2]}=0,
\end{equation*}
where $X_{[2]}=(X_{n+1}, X_{n+2}, \cdots, X_{n+k})^T$ and  $B=(B(1),...,B(k))^T$ depend on $k$ as before and $n$ is fixed.
\end{lemma}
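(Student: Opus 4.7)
The plan is to identify $B^T\Sigma_{22}^{-1}X_{[2]}$ with a conditional expectation and invoke the $L^2$ martingale convergence theorem. From the joint Gaussianity of $(S_n,X_{[2]})$ and the regression formula already derived in the proof of Theorem \ref{cll}, one has $\E[S_n\mid\mathcal{F}_k]=\mu(n,k)=B^T\Sigma_{22}^{-1}X_{[2]}$, where $\mathcal{F}_k$ denotes the $\sigma$-algebra generated by $X_{n+1},\ldots,X_{n+k}$. Since $\mathcal{F}_k\subseteq\mathcal{F}_{k+1}$, the sequence $(\E[S_n\mid\mathcal{F}_k])_{k\ge1}$ is a Doob martingale, bounded in $L^2$ by $\|S_n\|_2$ via conditional Jensen.

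The martingale convergence theorem then yields almost sure and $L^2$ convergence to a common limit $\E[S_n\mid\mathcal{F}_\infty]$, where $\mathcal{F}_\infty$ is the $\sigma$-algebra generated by $(X_{n+1},X_{n+2},\ldots)$. To identify this limit as $0$ almost surely, I would apply the law of total variance together with the fact that, in the Gaussian setting, $\Var(S_n\mid\mathcal{F}_k)=\sigma^2(n,k)$ is deterministic; this gives
$$\Var(S_n)=\sigma^2(n,k)+\Var\big(\E[S_n\mid\mathcal{F}_k]\big).$$
Because the martingale has mean zero, its $L^2$ norm satisfies $\|\E[S_n\mid\mathcal{F}_k]\|_2^2=\Var(S_n)-\sigma^2(n,k)=B^T\Sigma_{22}^{-1}B$, which tends to $0$ as $k\to\infty$ by Lemma \ref{cov}. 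Hence the $L^2$ limit is the zero random variable, and by uniqueness of a.s.\ and $L^2$ limits of a convergent martingale, $\E[S_n\mid\mathcal{F}_\infty]=0$ a.s., which is the desired claim.

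No substantial obstacle remains, since the delicate spectral estimation has already been absorbed into Lemma \ref{cov}; the remaining checks---the identification $\mu(n,k)=\E[S_n\mid\mathcal{F}_k]$ and the total variance decomposition---are standard for jointly Gaussian vectors. As an alternative route avoiding martingales entirely, one could observe that $B^T\Sigma_{22}^{-1}X_{[2]}$ is a centered Gaussian random variable with variance $B^T\Sigma_{22}^{-1}B$; the polynomial decay extracted from Lemma \ref{cov} under the hypothesis $3/4<H<1$ yields summable Gaussian tail probabilities $P(|B^T\Sigma_{22}^{-1}X_{[2]}|>\varepsilon)\le 2\exp(-\varepsilon^2 k^\eta/C)$ for some $\eta>0$, so that the Borel--Cantelli lemma yields the almost sure convergence to $0$.
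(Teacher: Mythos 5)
Your proposal is correct, and your primary route is genuinely different from the paper's. The paper proves this lemma exactly by your ``alternative route'': it observes that $B^T\Sigma_{22}^{-1}X_{[2]}$ is centered Gaussian with variance $\operatorname{Var}\bigl(B^T\Sigma_{22}^{-1}X_{[2]}\bigr)=B^T\Sigma_{22}^{-1}\Sigma_{22}\Sigma_{22}^{-1}B=B^T\Sigma_{22}^{-1}B$, bounds the tail by $\exp\bigl(-\tfrac{\epsilon^2}{2}(B^T\Sigma_{22}^{-1}B)^{-1}\bigr)$, and invokes Borel--Cantelli using the polynomial decay $B^T\Sigma_{22}^{-1}B=O(k^{-\overline\gamma})$ extracted from the \emph{proof} of Lemma \ref{cov} (the lemma's statement only asserts convergence to $0$, so the rate must be pulled from inside its proof). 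Your main argument instead recognizes $\mu(n,k)=B^T\Sigma_{22}^{-1}X_{[2]}=\mathrm{E}[S_n\mid\mathcal F_k]$ as a Doob martingale, gets a.s. and $L^2$ convergence from $L^2$-boundedness, and identifies the limit as $0$ because $\|\mathrm{E}[S_n\mid\mathcal F_k]\|_2^2=\operatorname{Var}(S_n)-\sigma^2(n,k)=B^T\Sigma_{22}^{-1}B\to 0$; all steps (the regression identity, the total-variance decomposition with deterministic conditional variance) are standard for jointly Gaussian vectors, and in fact the variance identity can be read off directly without the total-variance detour. What each buys: your martingale route needs only the qualitative conclusion of Lemma \ref{cov} and dovetails with the Doob martingale convergence already used at the end of the proof of Theorem \ref{cll}, so it is arguably cleaner and more robust; the paper's Borel--Cantelli route is more self-contained probabilistically (no identification of $\mathcal F_\infty$-limits needed) but requires the quantitative rate. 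Either argument suffices.
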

\begin{proof}
The random variable $B^T\Sigma^{-1}_{22}X_{[2]}$ has a normal distribution. Its mean  and  variance are $0$ and  $(B^T\Sigma_{22}^{-1})\Sigma_{22}(\Sigma_{22}^{-1}B)=B^T\Sigma_{22}^{-1}B$, respectively.
Hence  for any $\epsilon>0$, and surpressing the running index $k$,
\begin{eqnarray*}
\sum_{k=1}^{\infty} P(|B^T\Sigma^{-1}_{22}X_{[2]}|>\epsilon)
&=&\frac{2}{\sqrt{2\pi}}\sum_{k=1}^{\infty} \int_{\epsilon/(B^T\Sigma_{22}^{-1}B)^{\frac{1}{2}}}^{\infty}e^{-\frac{x^2}{2}}dx\\
&\leq&\frac{2}{\sqrt{2\pi}}\sum_{k=1}^{\infty} \exp\left\{-\frac{(\epsilon/(B^T\Sigma_{22}^{-1}B)^{\frac{1}{2}})^2}{2}\right\}\\
&=&\frac{2}{\sqrt{2\pi}}\sum_{k=1}^{\infty} \exp{(-\frac{\epsilon^2}{2}(B^T\Sigma_{22}^{-1}B)^{-1})}.
\end{eqnarray*}
Recall that by the proof of Lemma  \ref{cov} $B^T\Sigma_{22}^{-1}B =O(k^{-\overline \gamma})$ for some $\overline \gamma>0$, so that by
the Borel-Cantelli Lemma it follows that $B^T\Sigma^{-1}_{22}X_{[2]}\to 0$ as $k\to \infty$ almost surely.\QEDA
\end{proof}

\section{Limit Theorem of the Occupation Times of $\{S_n\}$}\label{section3}

Recall that the occupation time of  $S_n=\sum_{i=1}^{n}X_i$ is defined as $\lambda(n,A)=\sum_{i=1}^n \mathbb 1_{A}(S_{i})$.
The main result  is that for some $a_n$, the normalized occupation time $\frac{1}{a_n}\lambda(n, A)$  converges to   Mittag-Leffler distribution in the sense of Theorem \ref{main}. In this section, we give the proof of Theorem \ref{main} via Theorem \ref{cll}.

\subsection{Representation of the occupation time }\label{represent}
 
 In this section, we consider the stationary Gaussian random variables $\{X_n\}$ as above to be represented as the coordinate process of a shift dynamical systems.

Without loss of generality,  suppose the random variables $\{X_n\}$ are defined on  the probability space $(\Omega,\mathcal B_0,P)$ with $\Omega=\mathbb{R}^{\mathbb{N}}$ and $\mathcal B_0$ is the $\sigma-$algebra generated by the cylinder sets of $\mathbb{R}^{\mathbb{N}}$. Let $T$ denote the shift operator: $T: \Omega \to \Omega$, $(T\omega)_{i}=\omega_{i+1}$ where $\omega=(\omega_1,\omega_2...)\in \mathbb{R}^{\mathbb{N}}$. Define $\phi: \Omega\mapsto \mathbb{R}$ as $\phi(\omega):=\omega_1$,  $\int_{\Omega}|\phi| dP<\infty$ and $\int_{\Omega}\phi ~dP=0$.  The probability  $P$ is the distribution of the  stochastic process $\{X_n\}$, which then is represented as $X_{n}(\omega):=\phi \circ T^{(n-1)}(\omega)=\omega_{n}$, $n\in \mathbb N$, and  has the joint Gaussian probability distribution with zero mean: for any family of Borel sets $C_1,C_2...C_r\subset \mathbb{R}$, 
$P(\{\omega\in \Omega: X_{n_1}(\omega) \in C_1,X_{n_2}(\omega) \in C_2,...,X_{n_r}(\omega)\in C_r\})=\int_{C_1 \times C_2 ...\times C_r}p(t_1,t_2,...,t_r)dt_1dt_2...dt_r$.
Here $p$ is the normal probability density function: $p(t_1,t_2,...,t_r)=C\exp(-\frac{1}{2}(Dt,t))$, where $D$ is the matrix inverse to the covariance matrix $B=(b({n_i- n_j}))$ and $b(n_i-n_j)=E[X_{n_i}X_{n_j}]$.

We represent the occupation time of $\{S_n\}$ by introducing the skew product:
Let $(X,\mathcal{B},\mu)=(\Omega \times \mathbb{R},\mathcal B_0 \times \sigma(\mathbb{R}),P \times m_{\mathbb{R}})$, where $\sigma(\mathbb{R})$ is the Borel $\sigma$-algebra and $m_{\mathbb{R}}$ denotes the Lebegue measure on $\mathbb{R}$. Define $\tilde{T}:\Omega \times \mathbb{R}\to \Omega \times \mathbb{R} $ by $\tilde{T}(\omega, r):=(T(\omega),r+\phi(\omega))$. By induction, $\tilde{T}^n(\omega, r)=(T^n\omega, r+S_n(\omega))$, where $S_n$ is the partial sum of $\{X_n\}$. 

Define $S^{\tilde{T}}_n(f):=\sum_{k=0}^{n-1}f\circ {\tilde{T}}^k$ for any $f:\Omega\to \mathbb{R}$. Specifically, $S^T_n(\phi)= S_n$. Let $A=\Omega \times \mathbb 1_{D}$, then the occupation time of $\{S_n\}$ for set $D\subset\mathbb{R}$ has the following representation: \\
\begin{equation*}
\lambda(n,D)=\sum_{i=1}^n \mathbb 1_{\{S_i\in D\}}=\sum_{i=1}^n \mathbb 1_{\{ A\}}(\tilde{T}^i(\omega, 0) )=S_n^{\tilde{T}}(\mathbb 1_{\{ A\}})(\omega, 0).
\end{equation*}

\subsection{Proof of the main theorem}\label{4}

\begin{theorem}[Conservative and Ergodic Decomposition]\label{ced}
For the dynamic system $(X, \mathcal{B}, \mu, \tilde{T})$ defined above, one has
\begin{enumerate}
\item $\tilde{T}$ is a conservative  measure preserving transformation of $(X,\mathcal{B},\mu)$.
\item There exists a probability space $(Y, \mathcal{C}, \lambda)$ and a collection of measures 
$\{\mu_{y}:y \in Y\}$ on $(X,\mathcal{B})$ such that 
\begin{enumerate}
\item for $\lambda-$almost all $y \in Y$, $\tilde{T}$ is a conservative, ergodic measure-preserving transformation of $(X, \mathcal{B}, \mu_y)$;
\item for $A\in \mathcal{B}$, the map $y\to \mu_{y}(A)$ is measurable and 
$$\mu(A)=\int_{Y} \mu_{y}(A)d\lambda(y).$$
\end{enumerate}
\item And $\lambda$-almost surely, $(X, \mathcal{B}, \mu_y, \tilde{T})$  is pointwise dual ergodic.
\end{enumerate}
\end{theorem}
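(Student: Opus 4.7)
The plan attacks (1)--(3) in order, with the conditional local limit theorem doing the substantial work in (3).

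For (1), measure preservation is a Fubini computation: $T$ preserves $P$ by stationarity, the fiber map $r \mapsto r + \phi(\omega)$ preserves Lebesgue measure, and together these imply $\mu \circ \tilde T^{-1} = \mu$. Conservativity would follow from the Schmidt recurrence criterion for $\mathbb R$-valued cocycles over an ergodic base: $\tilde T$ is conservative iff $S_n$ is recurrent in $\mathbb R$, and recurrence here is immediate since $S_n / d_n$ is standard normal and therefore $\liminf_n |S_n|<\infty$ almost surely. Ergodicity of the base $(T, P)$ is the classical Maruyama criterion, valid because the spectral measure of $\{X_n\}$ has no atoms. For (2) I would cite the Rokhlin ergodic decomposition theorem for conservative measure-preserving actions on standard Borel spaces (Aaronson, \emph{An Introduction to Infinite Ergodic Theory}): disintegration of $\mu$ along the $\sigma$-algebra of $\tilde T$-invariant events produces $(Y, \mathcal C, \lambda)$ and the required family of ergodic component measures $\{\mu_y\}$.

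The substance is (3). First, I compute the Perron--Frobenius dual $\hat{\tilde T}$ of $\tilde T$ with respect to $\mu$. A direct duality calculation gives, for $A = \Omega \times [c,d]$,
\[
\hat{\tilde T}^n \mathbb 1_A (\omega, s) = P\bigl(S_n \in (s - d, s - c) \,\big|\, (X_{n+1}, X_{n+2}, \dots) = \omega\bigr),
\]
i.e.\ the conditional probability that the dfBm partial sum lands in a prescribed window given the observed future. Applying Theorem \ref{cll} with $q_n = 0$ and interval $(s - d, s - c)$ (bounded as $s$ ranges over $[c, d]$), the uniform-in-$\omega$ and uniform-in-intervals part of the statement gives
\[
d_n \, \hat{\tilde T}^n \mathbb 1_A (\omega, s) \longrightarrow (d - c) g(0)
\]
uniformly on $A$. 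Since $d_n \sim c_0 n^H$, choosing $a_n := g(0)[c_0 (1-H)]^{-1} n^{1-H}$ and Cesaro-summing yields
\[
\frac{1}{a_n} \sum_{k = 1}^{n} \hat{\tilde T}^k \mathbb 1_A (\omega, s) \longrightarrow \mu(A)
\]
uniformly on $A$, which is the Darling--Kac condition for $A$ with return sequence $a_n$. To pass to each ergodic component, observe that for $f$ supported in a single component $X_y$ one has $\hat{\tilde T} f = \hat{\tilde T}_y f$, so the computation carries over to each $\mu_y$ with limit $\mu_y(A)$ in place of $\mu(A)$, establishing pointwise dual ergodicity of $(X, \mathcal B, \mu_y, \tilde T)$ with the same return sequence.

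The main obstacle is precisely this descent: the indicator $\mathbb 1_A$ is a ``horizontal slice'' and is not supported in any single ergodic component, so one cannot simply restrict the global computation. I would handle it via Hopf's ratio ergodic theorem on each component: for a second reference set $A'$ of positive finite $\mu_y$-measure,
\[
\frac{\sum_{k=1}^n \hat{\tilde T}^k \mathbb 1_A}{\sum_{k=1}^n \hat{\tilde T}^k \mathbb 1_{A'}} \xrightarrow[n\to\infty]{} \frac{\mu_y(A)}{\mu_y(A')} \quad \mu_y\text{-a.s.,}
\]
and combined with the global asymptotic this pins down the correct limit $\mu_y(A)$ on each component. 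The uniform-in-intervals statement in Theorem \ref{cll} is indispensable here: without uniformity one could not turn the $\mu$-a.s.\ estimate into a $\mu_y$-a.s.\ estimate for $\lambda$-a.e.\ $y$.
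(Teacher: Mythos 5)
Your overall architecture matches the paper's: conservativity of the skew product from the zero-mean integrable cocycle over an ergodic base, ergodic decomposition quoted from Aaronson, and pointwise dual ergodicity obtained by identifying $\hat{\tilde T}^n\mathbb 1_{\Omega\times(a,b)}$ with the conditional probability $P(S_n\in(z-b,z-a)\mid T^n(\cdot)=\omega)$, invoking Theorem \ref{cll}, and descending to the components by a ratio ergodic theorem (the paper uses Hurewicz's theorem where you use Hopf's; these play the same role). Two steps, however, are not right as written.

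First, your justification of recurrence is a non sequitur. From ``$S_n/d_n$ is standard normal'' with $d_n\sim c_0n^H\to\infty$ one gets $P(|S_n|\le M)=O(M/d_n)\to 0$ for every fixed $M$; a distributional limit theorem with a divergent normalization gives no control on $\liminf_n|S_n|$ (compare transient stable walks, which also satisfy such a limit theorem). The correct tool is exactly what the paper cites (Corollary 8.1.5 in Aaronson, i.e.\ Atkinson's theorem): for an integrable cocycle $\phi$ over an ergodic probability-preserving base, $\int\phi\,dP=0$ already implies recurrence, hence conservativity of $\tilde T$. You also need to supply ergodicity of the base; the paper gets mixing from $b(n)\to 0$, which is equivalent to your spectral remark but should be stated as the step that is actually verified.

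Second, the normalization in the descent to components is off. The global asymptotic gives $\sum_{k\le n}\hat{\tilde T}^k\mathbb 1_A\sim a_n(b-a)$ $\mu$-a.e., hence $\mu_y$-a.e.\ for $\lambda$-a.e.\ $y$, with limit constant $(b-a)=\mu(A)$ --- not $\mu_y(A)$. Feeding this into the ratio theorem on the component yields, for $f\in L^1(\mu_y)$,
\begin{equation*}
\frac{1}{a_n}\sum_{k=1}^{n}\hat{\tilde T}^k f\;\longrightarrow\;\frac{(b-a)}{\mu_y(\Omega\times(a,b))}\,\mu_y(f)=:C(y)\,\mu_y(f),
\end{equation*}
so the return sequence of $(X,\mathcal B,\mu_y,\tilde T)$ is $a_nC(y)$, not $a_n$; one must also check (as the paper does, by varying the interval) that $C(y)$ does not depend on $(a,b)$. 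This still gives pointwise dual ergodicity, so the theorem survives, but the factor $C(y)$ cannot be dropped: it is tracked explicitly in the proof of Theorem \ref{main} and cancels only at the end.
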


\begin{proof}
\begin{enumerate}
\item 
By Corollary 8.1.5 in \cite{Jon}, in order to prove that $\tilde{T}$ is conservative, it is sufficient to show  that 
\begin{description}
\item[(1)]  $\phi:\Omega\to \mathbb{R}$ is integrable, and $\int_{\Omega}\phi ~dP=0$.
\item[(2)] $T$ is ergodic and probability  preserving on $(\Omega, \mathcal B_0,P)$.
\end{description}
\textbf{(1)} holds by our assumption on $\{X_n\}$.
For \textbf{(2)}, by \cite{CFS} (page 369), $\displaystyle{\lim_{n\to \infty}}b(n)=0$ is a necessary and sufficient   condition that $T$ is mixing: $|P(A\cap T^{-n}B)- P(A)P(B)| \to 0 $ as $n\to \infty$ for any $A,B\in \mathcal B_0$. It implies that $T$ is ergodic. $T$ is also a probability preserving transformation, since $\{X_n\}$ is a stationary process.   Hence $\tilde{T}$ is conservative.

$\tilde{T}$ is measure preserving since $T$ is measure preserving.
\item
The proof of the ergodicity decomposition is an adaption of the corresponding argument of Section 2.2.9 of \cite{Jon} (page 63).
\item
Next we prove that $(X, \mathcal{B}, \mu_y, \tilde{T})$  is pointwise dual ergodic.

We claim that
\begin{equation*}\label{c.e.}
\sum_{n=0}^{\infty} P_{{\tilde{T}}^n} \mathbb 1_{\Omega} \otimes \mathbb 1_{(a,b)}=\infty\quad \mu-a.s.,
\end{equation*}
where $P_{\tilde{T}^n}: L^1(\mu)\to L^1(\mu)$ is the Frobenius-Perron operator of $\tilde{T}^n$: 
\begin{equation*}
\int_{X}(P_{\tilde{T}^n}f)\cdot g~d\mu=\int_{X} f\cdot (g\circ \tilde{T}^n)~d\mu
\end{equation*}
for any $g\in L^{\infty}(\mu).$
Likewise we denote the Frobenius-Perron operator for $T^n$ by $P_{T^n}$.
Now,
\begin{equation*}
P_{\tilde{T}^n}  (\mathbb 1_{\Omega} \otimes \mathbb 1_{(a,b)})(\omega, z)=P_{T^n}\Bigg(\mathbb 1_{(z-b,z-a)}\bigg(S_n(\cdot)\bigg)\Bigg)(\omega),\ \mu-a.s. \  (\omega,z)\in X,
\end{equation*}
and
\begin{equation*}
P_{T^n}\Bigg(\mathbb 1_{(z-b,z-a)}\bigg(S_n(\cdot)\bigg)\Bigg)(\omega)=P(S_n\in(z-b,z-a)|T^n(\cdot)=\omega).
\end{equation*}
By Theorem  \ref{cll},
\begin{equation*}
\sum_{n=0}^N P_{\tilde{T}^n}\bigg( \mathbb 1_{\Omega}\otimes \mathbb 1_{(a,b)} \bigg )(\omega, z)=\sum_{n=0}^NP(S_n\in(z-b,z-a)|T^n(\cdot)=\omega) \sim \sum_{n=0}^N (b-a)\frac{g(0)}{d_n}\;\mu-a.s.\ (\omega,z)\in X.
\end{equation*}  
Let $a_N=\sum_{n=0}^N\dfrac{g(0)}{d_n}$, then $a_N \to \infty$ as $N\to \infty$, and  
$$\sum_{n=0}^N P_{\tilde{T}^n}\bigg( \mathbb 1_{\Omega}\otimes \mathbb 1_{(a,b)} \bigg )\sim a_N(b-a), \;\mu-a.s..$$
It follows that $\lambda$-a.s. $y$,
\begin{equation*}
\sum_{n=0}^N P_{\tilde{T}^n}\bigg( \mathbb 1_{\Omega}\otimes \mathbb 1_{(a,b)} \bigg )\sim a_N(b-a)\;\mu_{y}-a.s..
\end{equation*}
\end{enumerate}
Since for $\lambda$-a.e. $y$, $\tilde{T}$ is conservative on $(X, \mathcal{B}, \mu_y)$, by  Hurewicz's ergodic theorem (see for example, \cite{Jon}), one has for all $f\in L^1(\mu_{y})$, $\mu_{y}$-almost surely,
\begin{eqnarray*}
\dfrac{1}{a_N}\sum_{n=0}^N P_{\tilde{T}^n}f&\sim& \dfrac{(b-a)\sum_{n=0}^N P_{\tilde{T}^n}f}{\sum_{n=0}^N P_{\tilde{T}^n}\bigg( \mathbb 1_{\Omega} \otimes \mathbb 1_{(a,b)} \bigg)}\\
&\to& \dfrac{(b-a)\int_{\Omega\times \mathbb{R}}fd\mu_y}{\int_{\Omega\times \mathbb{R}} \mathbb 1_{\Omega}\otimes \mathbb 1_{(a,b)} d\mu_y}\\
&=& \frac{(b-a)}{\mu_{y}(\Omega\otimes (a,b))}\int_{\Omega\times \mathbb{R}}fd\mu_y.
\end{eqnarray*}

Since we may change the interval $(a,b)$ to be some other interval $(c,d)$, one finds that $\frac{(b-a)}{\mu_{y}(\Omega\otimes (a,b))}$ does not depend on $(a,b)$, hence $\frac{(b-a)}{\mu_{y}(\Omega\otimes (a,b))}$ is a constant, denoted by $C(y)$.

Thus, $(X, \mathcal{B}, \mu_{y}, \tilde{T})$  is pointwise dual ergodic with return sequence $a_nC(y)$.
\QEDA
\end{proof}

We end  the paper with the proof of Theorem \ref{main}.
\begin{proof}
Since $\tilde{T}$ is pointwise dual ergodic with respect to measure $\mu_{y}$, suppose $a_n$ is regularly varying with index $\alpha=1-H$ and has the same order as $\sum_{i=0}^n \frac{g(0)}{d_i}$, then by Corollary 3.7.3 in \cite{Jon}, $\dfrac{S_n^{\tilde{T}}}{C(y)a_n}$ 
converges strongly in distribution, i.e.,
\begin{equation*} 
\int_{X} v\bigg(\dfrac{S^{\tilde{T}}_n(f)(\omega,x)}{C(y)a_n}\bigg)h_y(\omega,x)d\mu_{y}(\omega,x)\to E[v(\mu_{y}(f)Y_{\alpha})],
\end{equation*}
or equivalently,
\begin{equation}\label{strongconv} 
\int_{X} v\bigg(\dfrac{S^{\tilde{T}}_n(f)(\omega,x)}{a_n}\bigg)h_y(\omega,x)d\mu_{y}(\omega,x)\to E[v(C(y)\mu_{y}(f)Y_{\alpha})],
\end{equation}
for any bounded and continuous function $v:\mathbb R\to\mathbb R$ and for any $h_y\in L^1(\mu_{y})$ and $\int_{X}h_yd\mu_{y}=1,$  where $S_n^{\tilde{T}}(f)=\sum_{i=1}^{n}{f\circ {\tilde{T}}^{i-1}}$, and $Y_{\alpha}$ has the normalized Mittag-Leffler distribution of order $\alpha=1-H$.

Let $f=\mathbb 1_{\Omega}\times \mathbb 1_{(a,b)}$, then $S^{\tilde{T}}_n(f)(\omega,x)=\sum_{i=1}^n\mathbb 1_{(a,b)}(x+S_i(\omega))$, which is the occupation time  of $S_n$ at time $n$ on interval $(a-x,b-x)$. Since $C(y)=\frac{b-a}{\mu_y(\mathbb 1_{\Omega}\otimes \mathbb 1_{(a,b)})}$, $C(y)\mu_{y}(\mathbb 1_{\Omega}\otimes \mathbb 1_{(a,b)})=\int_{\mathbb{R}} \mathbb 1_{(a,b)}dm_{\mathbb R}$, then the right hand side of (\ref{strongconv}) is simplified to be $E\bigg[v\bigg((b-a)Y_{\alpha}\bigg)\bigg]$.

 If $\Phi_1(\omega,x)$ is  any probability density function  on $(X,\mathcal{B}, \mu)$,
 for each $y$, define 
 \begin{eqnarray*}
 \phi_y(\omega,x)=
 \begin{cases}
 \frac{1}{\int_X \Phi_1(\omega,x)d\mu_y(\omega,x)} \Phi_1(\omega,x), &\int_X \Phi_1(\omega,x)d\mu_y(\omega,x)\neq 0;\\
 0, &\int_X \Phi_1(\omega,x)d\mu_y(\omega,x)= 0.
 \end{cases}
 \end{eqnarray*}
 $ \phi_y(\omega,x)$ is a density function on $(X,\mathcal{B}, \mu_y)$ for $y\in U$ where $U=\{y\in Y: \int_X \Phi_1(\omega,x)d\mu_y(\omega,x)\neq 0\}$.  
 By (\ref{strongconv}) and Theorem \ref{ced},  one has 
\begin{eqnarray*}
&&\int_{X} v\bigg(\dfrac{\sum_{i=1}^n\mathbb 1_{(a,b)}(x+S_i(\omega))}{a_n}\bigg)\Phi_1(\omega,x)d\mu(\omega,x)\\
&=& \int_U \int_X v\bigg(\dfrac{\sum_{i=1}^n\mathbb 1_{(a,b)}(x+S_i(\omega))}{a_n}\bigg)\Phi_1(\omega,x)d\mu_y(\omega,x) d\lambda(y)\\
&=& \int_U (\int_X \Phi_1(\omega,x)d\mu_y(\omega,x))\int_X v\bigg(\dfrac{\sum_{i=1}^n\mathbb 1_{(a,b)}(x+S_i(\omega))}{a_n}\bigg)\phi_y(\omega,x)d\mu_y(\omega,x) d\lambda(y)\\
&=& \int_Y (\int_X \Phi_1(\omega,x)d\mu_y(\omega,x))\int_X v\bigg(\dfrac{\sum_{i=1}^n\mathbb 1_{(a,b)}(x+S_i(\omega))}{a_n}\bigg)\phi_y(\omega,x)d\mu_y(\omega,x) d\lambda(y)\\
&\to &\int_Y  \mu_y(\Phi_1)  E[v((b-a)Y_{\alpha})]d\lambda(y) \text{ by  the dominated convergence theorem}\\
&=&E[v((b-a)Y_{\alpha})].
\end{eqnarray*}

Let $\Phi_1(x,\omega)=\frac{1}{2\epsilon}\mathbb 1_{\{-\epsilon,\epsilon\}}(x)\otimes \Phi(\omega)$ where $\epsilon>0$ and   $\Phi(\omega)$ is a probability density function on $(\Omega, \mathcal B_0, P)$.  Then one obtains  as $n\to \infty,$
\begin{eqnarray*}
&&\frac{1}{2\epsilon} \int_{-\epsilon}^{\epsilon}\left[\int_{\Omega} v\bigg(\dfrac{\sum_{i=1}^n\mathbb 1_{(a-x,b-x)}(S_i(\omega))}{a_n}\bigg)\Phi(\omega)dP(\omega)\right] ~dx\to E[v((b-a)Y_{\alpha})].
\end{eqnarray*}
\QEDA
\end{proof}

\end{document}